\newcommand{\labell}[1] {\label{#1}}
\numberwithin{equation}{section}
\newtheorem {Theorem}{Theorem}
\numberwithin{Theorem}{section}
\newtheorem {Lemma}[Theorem]{Lemma}
\newtheorem {Proposition}[Theorem]{Proposition}
\theoremstyle{definition}
\newtheorem{Definition}[Theorem]{Definition}
\theoremstyle{remark}
\newtheorem{Remark}[Theorem]{Remark}
\chardef\csname pre amssym.def at\endcsname=\the\catcode`\@
\def\undefine#1{\let#1\undefined}
\def\newsymbol#1#2#3#4#5{\let\next@\relax
 \ifnum#2=\@ne\let\next@\msafam@\else
 \ifnum#2=\tw@\let\next@\msbfam@\fi\fi
 \mathchardef#1="#3\next@#4#5}
\def\mathhexbox@#1#2#3{\relax
 \ifmmode\mathpalette{}{\m@th\mathchar"#1#2#3}%
 \else\leavevmode\hbox{$\m@th\mathchar"#1#2#3$}\fi}
\def\hexnumber@#1{\ifcase#1 0\or 1\or 2\or 3\or 4\or 5\or 6\or 7\or 8\or
 9\or A\or B\or C\or D\or E\or F\fi}
\font\teneufm=eufm10
\font\seveneufm=eufm7
\font\fiveeufm=eufm5
\newcommand{\ai}{\mathfrak{a}}
\newcommand{\FF}{{\mathcal F}}
\newcommand{\PP}{{\mathcal P}}
\newcommand{\SC}{{\mathcal S}}
\def    \R      {{\mathbb R}}
\def    \Z      {{\mathbb Z}}
\newcommand{\supp}{{\mathit supp}\,}
\newcommand{\id}{{\mathit id}}
\def    \p        {\partial}
\def    \Fix      {\operatorname{Fix}}
\def \codim {\operatorname{codim}} 
\def    \Hamt {{ \widetilde{ \operatorname{Ham} } }} 
\def    \HF   {\operatorname{HF}} 
\def \wM {\omega_{\scriptscriptstyle{M}}}
\begin{document}


\setlength{\smallskipamount}{6pt}
\setlength{\medskipamount}{10pt}
\setlength{\bigskipamount}{16pt}




\title[leafwise Coisotropic Intersections]{leafwise Coisotropic
Intersections}

\author[Ba\c sak G\"urel]{Ba\c sak Z. G\"urel}

\address{Department of Mathematics, Vanderbilt University, Nashville,
  TN 37240, USA} \email{basak.gurel@vanderbilt.edu}

\subjclass[2000]{53D40, 37J45, 70H12}
\date{\today} 



\begin{abstract}
  We establish the leafwise intersection property for closed,
  coisotropic submanifolds in an exact symplectic manifold satisfying
  natural additional assumptions.
\end{abstract}

\maketitle


\section{Introduction and main results }
\labell{sec:main-results}
In this paper, we study the question of existence of leafwise
intersections for coisotropic submanifolds -- one of the
generalizations of the well-known Lagrangian intersection property to
coisotropic submanifolds. More specifically, we establish the
existence of leafwise intersections for restricted contact type
coisotropic submanifolds and Hamiltonian diffeomorphisms with Hofer
energy below a certain natural, sharp threshold. In other words, under
these conditions, we prove the existence of a leaf $F$ of the
characteristic foliation on a coisotropic submanifold $M$ such that
$\varphi(F) \cap F \neq \emptyset$ for a Hamiltonian diffeomorphism
$\varphi$.

The problem of existence of coisotropic intersections was first
addressed by Moser in the late 70s, \cite{Mo78}, and since then
various forms of the coisotropic intersection property have been
established.  For instance, the existence of ordinary intersections of
$M$ and $\varphi(M)$, or, roughly equivalently, lower bounds on the
displacement energy of $M$ have also been investigated. In this vein,
positivity of the displacement energy for stable coisotropic
submanifolds of $\R^{2n}$ was proved by Bolle, \cite{Bo96,Bo98}. (See
Section \ref{subsec:prelim-lw} for definitions.) In \cite{Gi07coiso},
revitalizing the subject a decade after Bolle's work, Ginzburg
extended Bolle's result to general symplectically aspherical
manifolds. This was generalized to closed, rational symplectic
manifolds by Kerman, \cite{Ke07coiso}, and, more recently, Usher,
\cite{U09coiso}, proved that the displacement energy of a stable
coisotropic submanifold of any closed or convex symplectic manifold is
positive.  Currently, the question is also being studied by Tonnelier,
\cite{To}.

The main result of this paper, falling in the realm of leafwise
intersections, is

\begin{Theorem}
\label{thm:lw}
Let $(W^{2n},\,\omega)$ be an exact symplectic manifold with
$c_1|_{\pi_2(W)} = 0$ which is geometrically bounded and wide. Let $M
\subset W$ be a closed, coisotropic submanifold of restricted contact
type. Then, for any compactly supported Hamiltonian diffeomorphism
$\varphi_H \colon W \to W$ with $\|H\| < \ai(M)$, there exists a leaf
$F$ of the characteristic foliation $\FF$ on $M$ such that
$\varphi_H(F) \cap F \neq \emptyset$. Here,
$$
\ai(M) = \inf \{
A(\gamma) >0 \mid \gamma \text{ is a loop that is tangent to } \FF
\text{ and contractible in } W \},
$$ 
where $A(\gamma)$ is the (negative) symplectic area bounded by
$\gamma$.
\end{Theorem}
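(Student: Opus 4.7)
The strategy is to translate the existence of a leafwise intersection point for $\varphi_H$ into the existence of a 1-periodic orbit of a suitably modified Hamiltonian, and then to detect such an orbit by a filtered Floer-homological computation, with the threshold $\ai(M)$ guaranteeing that the orbit is of the correct geometric type. First I would invoke the coisotropic neighborhood theorem, enhanced by the restricted contact type hypothesis, to identify a tubular neighborhood of $M$ in $W$ with a standard model $M \times B$, where $B \subset \R^{\codim M}$ is a small ball, in such a way that the primitive $\alpha$ of $\omega$ takes an explicit form and the characteristic foliation $\FF$ is generated by Reeb-type vector fields along the slices $M \times \{p\}$. In this model the holonomy pseudogroup of $\FF$ is produced by flowing along commuting vector fields, and leafwise intersections for $\varphi_H$ become $x \in M$ with $\varphi_H(x) \in M$ and $\varphi_H(x)$ reachable from $x$ by this flow.

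Next, following the philosophy used for leafwise intersections of contact type hypersurfaces, I would construct an auxiliary Hamiltonian $K$ on $W$ which agrees with $H$ on a small neighborhood of $M$ but is modified in the normal $B$-direction by a radial profile $r$. The profile is chosen so that the 1-periodic orbits of $K$ contained in the chosen neighborhood of $M$ are in bijection with leafwise intersection points of $\varphi_H$ on $M$, with an orbit whose normal coordinate lies at a level set of $r$ corresponding to a prescribed amount of Reeb-type drift along $\FF$. The action of such an orbit differs from the original action of the leafwise problem by a quantity that can be made as small as desired by shrinking the support of $r$, so that the sharp threshold $\ai(M)$ is not blurred.

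To detect a 1-periodic orbit of $K$, I would set up the filtered Hamiltonian Floer homology $\HF^{(a,b)}(K)$ using the hypotheses that $(W,\omega)$ is exact, $c_1|_{\pi_2(W)}=0$, geometrically bounded, and wide. Wideness provides a reference Hamiltonian with sufficiently large oscillation whose Floer homology recovers the ordinary homology of $W$ in a controlled action window and is in particular nonzero. A continuation argument, or equivalently a PSS-type map, then transports a nonzero class to $\HF^{(-\ai(M),\,\ai(M))}(K)$, yielding a 1-periodic orbit of $K$ with action in that window. The hypothesis $\|H\| < \ai(M)$ is used precisely to bound the action shift of this class along the continuation so that it remains inside this window.

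The final step is an action bound: by the very definition of $\ai(M)$, any 1-periodic orbit of $K$ whose normal level set forces a nontrivial, contractible loop along $\FF$ carries an absolute action of at least $\ai(M)$; hence the orbit detected above must come from the zero level of $r$ and so from a leafwise intersection of $\varphi_H$ on $M$. The main obstacle is the Floer-homological step: one has to build filtered Hamiltonian Floer theory for the noncompact target $W$ so that it enjoys the usual continuation and PSS properties, choose a reference model whose Floer homology computes the singular homology of $W$ in the required action window, and propagate action estimates through the continuation maps without losing the threshold $\ai(M)$. This is exactly where geometric boundedness (to control Floer trajectories at infinity), wideness (to produce the reference model with nontrivial Floer homology), and $c_1|_{\pi_2(W)}=0$ combined with exactness (to have a well-defined $\Z$-grading and action filtration) all enter the argument. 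Once this machinery is in place, the normal form of step one and the sharpness of $\ai(M)$ combine to extract the desired leaf $F$ with $\varphi_H(F)\cap F\neq\emptyset$.
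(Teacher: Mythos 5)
Your overall philosophy---perturb near $M$ in the normal direction, detect a periodic orbit Floer-theoretically, and use $\|H\|<\ai(M)$ as an action threshold---is in the same family as the paper's argument, but two of your steps fail as stated. First, the claimed bijection between one-periodic orbits of the modified Hamiltonian lying in the tubular neighborhood and leafwise intersection points of $\varphi_H$ \emph{on} $M$ is not true: a fixed point of $\varphi_H$ composed with a radial (leafwise geodesic) flow located at normal level $p\neq 0$ produces a pair of points $x,y$ with $\varphi_H(y)=x$ on the same leaf of the characteristic foliation of the nearby coisotropic slice $M_p=M\times\{p\}$, not of $M$ itself, and generically the detected orbit does not sit on $M$. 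Your final step, in which the threshold $\ai(M)$ is supposed to force the orbit down to the zero level, does not work: $\ai(M)$ is defined via \emph{closed} contractible loops tangent to $\FF$, whereas the leafwise segment traversed by such an orbit is an open chord, on which the definition of $\ai(M)$ imposes no constraint. This is exactly why the actual proof needs a mechanism you do not supply: using the global primitives $\bar\alpha_i$ provided by the restricted contact type hypothesis, one derives a uniform bound, independent of the perturbation and of the neighborhood size $r$, on the leafwise travel time from $x$ to $y$, then takes $r_i\to 0$ and applies Arzel\`a--Ascoli to chords on slices $M_{p(i)}$ with $|p(i)|\to 0$ to obtain an honest leafwise intersection on $M$.

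Second, the Floer-homological detection you describe is not available here, and the place where $\ai(M)$ enters is different. On a wide, geometrically bounded manifold the Floer homology of a compactly supported Hamiltonian does not recover $H_*(W)$ through a PSS map in a prescribed action window; what wideness yields is nonvanishing of the top-degree filtered Floer homology over $(0,\infty)$ for nonnegative Hamiltonians, hence an action selector $\sigma$ with monotonicity, continuity, positivity and $\sigma\le E^+$. The proof runs through this selector: for a purely radial Hamiltonian $K$ supported in $U_r$ (not an extension of $H$) one proves $\sigma(\varphi_K)\ge (1-O(r))\,\ai(M)$---this is the only place $\ai(M)$ and restricted contact type are used, via an energy estimate against one-forms $\beta_i$ interpolating between $\alpha_i$ and $\bar\alpha_i$---and then one deforms $\psi_s=\varphi_H\varphi_{sK}$, using $\sigma(\psi_0)\le\|H\|<C\le\sigma(\psi_1)$, continuity of $\sigma(\psi_s)$ in $s$, and nowhere density of the action values over $\Fix(\varphi_H)$ to find $s_0$ whose selector is carried by a fixed point in $U_r$. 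To repair your outline you would need to replace the bijection and the PSS step by (i) a selector lower bound of this type for the radial perturbation, and (ii) the travel-time bound plus the limiting argument passing from chords on nearby slices $M_p$ to a leafwise intersection on $M$.
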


\begin{Remark} 
  Note that $\ai(M) >0$ whenever $M$ has restricted contact type; see
  \cite{Gi07coiso}. Furthermore, we set $\ai(M) = \infty$ if there is
  no $\gamma$ as in the definition of $\ai(M)$.
\end{Remark}

\begin{Remark}
  The assumption that $c_1|_{\pi_2(W)} = 0$ is not really essential in
  Theorem \ref{thm:lw} and included only for the sake of
  completeness. To be more precise, several of the results used in the
  proof of the theorem were originally established for symplectically
  aspherical manifolds. However, it is clear these results also hold
  for exact, wide, geometrically bounded manifolds without any
  additional assumptions on the first Chern class.
\end{Remark}

A symplectic manifold $W$ is said to be wide if it is open and admits
an arbitrarily large compactly supported Hamiltonian without
non-trivial contractible fast periodic orbits.  This notion was
introduced and discussed in detail in \cite{Gu07}. In particular,
examples of wide manifolds include manifolds that are convex at
infinity (e.g., $\R^{2n}$, cotangent bundles, Stein manifolds) and
twisted cotangent bundles. The essence of the wideness property lies
in the fact that on a wide manifold the top degree Floer homology is
non-zero for any non-negative compactly supported Hamiltonian which is
not identically zero. This allows one to construct an action selector
for geometrically bounded and wide manifolds, a tool utilized in the
proof of Theorem \ref{thm:lw}. (It is not known how to define action
selectors for an arbitrary geometrically bounded manifold.)

As has been pointed out above, the problem of leafwise intersections
was first considered in \cite{Mo78} where Moser established the
existence of leafwise intersections whenever $\varphi$ is $C^1$-close
to the identity and $M$ is simply connected.  The latter assumption
was removed by Banyaga, \cite{Ba}. Furthermore, Ekelund and Hofer,
\cite{EH,Ho90a}, showed that for contact type hypersurfaces in
$\R^{2n}$, the $C^1$-smallness assumption on $\varphi$ can be replaced
by a much less restrictive hypothesis that the Hofer norm of $\varphi$
is smaller than a certain symplectic capacity of the domain bounded by
$M$. More recently, Ginzburg, \cite{Gi07coiso}, proved the existence
of leafwise intersections for restricted contact type hypersurfaces of
sub-critical Stein manifolds. The question has also been studied by
Ziltener, \cite{Zilt08}, under the additional assumption that the
characteristic foliation is a fibration.

Perhaps the most relevant to this work are recent papers
\cite{AF08,AF08generic} by Albers and Frauenfelder. In \cite{AF08},
the authors establish the existence of leafwise intersections for
hypersurfaces $M$ of restricted contact type, provided that the
ambient symplectic manifold is exact and convex at infinity and that
the Hofer norm of $\varphi$ does not exceed the minimal Reeb period
$\ai(M)$. In a follow-up work, \cite{AF08generic}, they prove that,
generically, the number of leafwise intersection points is at least
the sum of $\Z_2$-Betti numbers of $M$. The method used in
\cite{AF08,AF08generic} is the Rabinowitz Floer homology (see, e.g.,
\cite {CFO}). The result of the present paper is a generalization of
the theorem in \cite{AF08} to the case where $\codim M > 1$, without
the multiplicity lower bound on the number of leafwise
intersections. Thus, the leafwise intersection property holds not just
for hypersurfaces, but is a feature of coisotropic submanifolds in
general.

\begin{Remark}
  Comparing Theorem \ref{thm:lw} with the result from \cite{Mo78,Ba},
  note that the latter holds for any coisotropic submanifold $M$, but
  the requirement on $\varphi$ is very restrictive. On the other hand,
  in Theorem \ref{thm:lw} the roles are reversed: we impose a strong
  condition on $M$ while the requirement on $\varphi$ has been
  relaxed. It is then natural ask whether Theorem \ref{thm:lw} can be
  generalized to any coisotropic submanifold $M$ along the lines of
  \cite{Mo78,Ba}. This, however, is not the case and leafwise
  intersections appear to be fragile:\emph{ There exists a
    hypersurface $M \subset \R^{2n}$ (diffeomorphic to
    $S^{2n-1}$) and a sequence of (autonomous) Hamiltonians $H_i
    \colon \R^{2n} \to \R$, supported within the same compact set,
    such that
\begin{itemize}
\item $\parallel H_i \parallel _{C^0} \to 0$, and
\item $\varphi_{H_i}(M)$ and $M$ have no leafwise intersections.
\end{itemize}} 
\noindent This result will be proved in the forthcoming paper
\cite{Gu09cex}.
\end{Remark}

\subsection*{Acknowledgments} 
The author is deeply grateful to Viktor Ginzburg for many useful
discussions and his numerous valuable remarks and suggestions.


\section{Preliminaries}
\label{sec:lw-int}
We start this section by recalling the relevant definitions and basic
results concerning coisotropic submanifolds.  In Section
\ref{subsec:conv}, we set our conventions and notation.  We recall the
definition of the action selector for wide manifolds and state its
relevant properties in Section \ref{subsec:selector}.

\subsection{Preliminaries on coisotropic submanifolds}
\label{subsec:prelim-lw}
Let $(W^{2n}, \omega)$ be a symplectic manifold and let $M\subset W$
be a closed, coisotropic submanifold of codimension $k$. Set
$\wM=\omega|_M$. Then, as is well known, the distribution $\ker \wM$
has dimension $k$ and is integrable. Denote by $\FF$ the
characteristic foliation on $M$, i.e., the $k$-dimensional foliation
whose leaves are tangent to the distribution $\ker \wM$.

\begin{Definition}
\label{def:B}
The coisotropic submanifold $M$ is said to be \emph{stable} if there
exist one-forms $\alpha_1,\ldots,\alpha_k$ on $M$ such that $\ker
d\alpha_i\supset \ker\wM$ for all $i=1,\ldots,k$ and
\begin{equation*}
\labell{eq:ct}
\alpha_1\wedge\cdots\wedge\alpha_k\wedge \wM^{n-k}\neq 0
\end{equation*}
anywhere on $M$. We say that $M$ has \emph{contact type} if the forms
$\alpha_i$ can be taken to be primitives of $\wM$. Furthermore, $M$
has \emph{restricted} contact type if the forms $\alpha_i$ extend to
global primitives $\bar{\alpha}_i$ of $\omega$ on $W$.
\end{Definition}

Stable and contact type coisotropic submanifolds were introduced by
Bolle in \cite{Bo96,Bo98} and considered in a more general setting by
Ginzburg, \cite{Gi07coiso}, and afterwards also by Kerman,
\cite{Ke07coiso}, and by Usher, \cite{U09coiso}. (See also \cite{Dr}.)
Referring the reader to \cite{Gi07coiso} for a discussion of the
requirements of Definition \ref{def:B} and several illustrating
examples, let us merely note that these requirements are natural but
quite restrictive. For example, a stable Lagrangian submanifold is
necessarily a torus. In this paper, we will be mainly concerned with
coisotropic submanifolds of restricted contact type. This condition is
a generalization of its namesake for hypersurfaces, as is the case for
contact type or stability conditions.

Assume that $M$ is stable. The normal bundle to $M$ in $W$ is trivial
since it is isomorphic to $T^*\FF$ and, thus it can be identified with
$M \times \R^k$. From now on we identify a small neighborhood of $M$
in $W$ with a neighborhood of $M$ in $T^*\FF = M \times \R^k$ and use
the same symbols $\wM$ and $\alpha_i$ for differential forms on $M$
and for their pullbacks to $M \times \R^k$. (Thus, we will be
suppressing the pullback notation $\pi^*$, where $\pi \colon M \times
\R^k \to M$, unless its presence is absolutely necessary.)  Using the
Weinstein symplectic neighborhood theorem, we then have

\begin{Proposition}[\cite{Bo96, Bo98}]
\label{prop:normalform}
Let $M$ be a closed, stable coisotropic submanifold of $(W^{2n},
\omega)$ with $ \codim M = k$.  Then, for a sufficiently small $r>0$,
there exists a neighborhood of $M$ in $W$ which is symplectomorphic to
$U_r=\{ (q,p) \in M \times \R^k \mid |p| < r \}$ equipped with the
symplectic form $\omega = \wM + \sum_{j=1}^k d(p_j \alpha_j)$. Here
$(p_1,\ldots,p_k)$ are the coordinates on $\R^k$ and $|p|$ is the
Euclidean norm of $p$.
\end{Proposition}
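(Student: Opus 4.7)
The plan is a Weinstein-style symplectic neighborhood argument. The first step is to trivialize the normal bundle $NM = TW|_M/TM$. Since $M$ is coisotropic, $T\FF = TM^\omega$, and so $\omega$ induces a canonical bundle isomorphism $NM \xrightarrow{\sim} T^*\FF$ by $[w] \mapsto \omega(w,\cdot)|_{T\FF}$. The stability condition enters through the non-degeneracy $\alpha_1 \wedge \cdots \wedge \alpha_k \wedge \wM^{n-k} \neq 0$: since $\wM^{n-k}$ is non-degenerate on a complement of $T\FF$ inside $TM$, this forces the restrictions $\alpha_j|_{T\FF}$ to form a pointwise coframe of $T^*\FF$. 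Combining these two facts trivializes $NM \cong M \times \R^k$, yielding the fiber coordinates $(p_1,\ldots,p_k)$ referred to in the statement.

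Using this trivialization I would construct a tubular embedding $\Psi_0 \colon U_{r_0} \to W$ restricting to the identity on $M$, whose derivative at each $q \in M$ sends $\partial_{p_i}$ to a tangent vector $\tilde\partial_{p_i} \in T_qW$ satisfying
\begin{equation*}
\omega(\tilde\partial_{p_i}, v) = \alpha_i(v) \quad \text{for all } v \in T_qM.
\end{equation*}
Such vectors exist by a two-step linear-algebra argument: first pick the normal component of $\tilde\partial_{p_i}$ via the isomorphism $NM \cong T^*\FF$ so that the equality holds on $T\FF$, then correct by a tangential element $u$ solving $\iota_u \wM = \beta_i$ for the residual 1-form $\beta_i$, which annihilates $T\FF$ by construction; this is solvable because the image of $\wM^\flat \colon T_qM \to T_q^*M$ is precisely the annihilator of $T\FF$. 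With the lifts so chosen, a direct computation with the model form $\omega_0 := \wM + \sum_j d(p_j\alpha_j)$ at $p=0$ yields
\begin{equation*}
\omega_0\bigl((v,a),(w,b)\bigr) = \wM(v,w) + \sum_j \bigl(a_j\alpha_j(w) - b_j\alpha_j(v)\bigr),
\end{equation*}
and the matching condition on the lifts forces $\omega_0 = \Psi_0^*\omega$ on $TW|_M$. A short check using the coframe property of $\alpha_j|_{T\FF}$ shows that $\omega_0$ is non-degenerate along $M$, hence symplectic on a neighborhood of the zero section.

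The final step is the standard Moser interpolation. Set $\omega_t = (1-t)\omega_0 + t\,\Psi_0^*\omega$; this family is closed and, since $\omega_0$ and $\Psi_0^*\omega$ agree on $TW|_M$, non-degenerate on a uniform tube for every $t \in [0,1]$. The relative Poincar\'e lemma produces a 1-form $\beta$ vanishing on $M$ with $\Psi_0^*\omega - \omega_0 = d\beta$; the time-dependent vector field $Z_t$ defined by $\iota_{Z_t}\omega_t = -\beta$ also vanishes on $M$, so its flow $\phi_t$ exists on a smaller tube $U_r$ for $t \in [0,1]$ and satisfies $\phi_t^*\omega_t = \omega_0$ by the usual computation. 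Then $\Psi := \Psi_0 \circ \phi_1 \colon (U_r,\omega_0) \to (W,\omega)$ is the required symplectomorphism. The main obstacle is the first-order symplectic matching carried out in the middle paragraph: building a tubular identification whose linearization along $M$ intertwines $\omega$ and $\omega_0$. Once that linear-algebraic data is arranged, the Moser trick runs without any further use of the stability or contact-type structure on $M$.
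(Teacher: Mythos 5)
Your route is the expected one here: the paper gives no proof of its own, citing Bolle and simply remarking that the normal bundle is trivial because it is isomorphic to $T^*\FF$ and then invoking the Weinstein neighborhood theorem, which is exactly the skeleton you flesh out. The bundle isomorphism $NM\cong T^*\FF$, the coframe property of the $\alpha_j|_{T\FF}$ extracted from the stability condition, the non-degeneracy of $\omega_0$ along the zero section, and the Moser/relative Poincar\'e step are all correct as you present them.

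There is, however, one concrete oversight in the first-order matching, which is the step you yourself identify as the main obstacle. The condition $\omega(\tilde\partial_{p_i},v)=\alpha_i(v)$ for all $v\in T_qM$ does \emph{not} by itself force $\Psi_0^*\omega=\omega_0$ on $TW|_M$: since $\omega_0(\partial_{p_i},\partial_{p_j})=0$ at $p=0$, you also need $\omega(\tilde\partial_{p_i},\tilde\partial_{p_j})=0$, i.e.\ the lifts must span an $\omega$-isotropic complement of $TM$. Your two-step construction leaves these pairings completely undetermined (each lift is fixed only up to adding a section of $T\FF$), and without this extra condition the difference $\Psi_0^*\omega-\omega_0$ need not vanish on the full tangent space along $M$; then the primitive $\beta$ from the relative Poincar\'e lemma need not vanish along $M$, the Moser field need not be zero there, and the flow need not fix $M$ -- so the argument as written breaks at precisely the point where you claim it closes. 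The gap is fixable within your framework: writing $A_{ij}=\omega(\tilde\partial_{p_i},\tilde\partial_{p_j})$ (antisymmetric) and replacing $\tilde\partial_{p_i}$ by $\tilde\partial_{p_i}+c_i$ with $c_i\in T\FF$, one checks that the defining condition on $TM$ is unchanged while the pairings become $A_{ij}+\alpha_i(c_j)-\alpha_j(c_i)$; since the $\alpha_j|_{T\FF}$ form a coframe, one may choose the $c_i$ smoothly with $\alpha_j(c_i)=\tfrac12 A_{ij}$, which kills all the pairings. With this correction inserted, the rest of your proof goes through and agrees with the argument the paper delegates to Bolle and Weinstein.
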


As an immediate consequence of Proposition \ref{prop:normalform}, we
obtain a family of coisotropic submanifolds $M_p = M\times \{p\} $,
for $p \in B^k_r$, of $W$ which foliate a neighborhood of $M$ in $W$,
where $B_r^k$ is the ball of radius $r$, centered at the origin in
$\R^k$. Moreover, a leaf of the characteristic foliation on $M_p$
projects onto a leaf of the characteristic foliation on $M$.

Furthermore, we have

\begin{Proposition}[\cite{Bo96, Bo98, Gi07coiso}]
\label{prop:flow}
Let $M$ be a stable coisotropic submanifold.

\begin{enumerate}

\item[(i)] The leafwise metric $(\alpha_1)^2+\cdots+(\alpha_k)^2$ on
  $\FF$ is leafwise flat.

\item[(ii)] The Hamiltonian flow of
  $\rho=(p_1^2+\cdots+p_k^2)/2=|p|^2/2$ is the leafwise geodesic flow
  of this metric.

\end{enumerate}

\end{Proposition}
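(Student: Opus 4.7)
My plan is to prove both statements together by exhibiting an explicit commuting frame on each leaf dual to the forms $\alpha_i$.

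Fix a leaf $F$ of $\FF$ and consider the restrictions $\alpha_i|_F$. Stability gives $\ker d\alpha_i \supset \ker\wM = TF$, so each $\alpha_i|_F$ is closed. The top-form condition $\alpha_1\wedge\cdots\wedge\alpha_k\wedge\wM^{n-k}\neq 0$, together with the non-degeneracy of $\wM$ on a complement of $\ker\wM$, forces $\alpha_1|_F,\ldots,\alpha_k|_F$ to be pointwise linearly independent on $F$. Globally on $M$, I then define vector fields $X_1,\ldots,X_k$ by the unique conditions $X_j\in\ker\wM$ and $\alpha_i(X_j)=\delta_{ij}$; on each leaf these form an orthonormal frame for the metric $(\alpha_1)^2+\cdots+(\alpha_k)^2$. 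Cartan's formula yields
$$0 \;=\; d\alpha_i|_F (X_j,X_l) \;=\; -\alpha_i([X_j,X_l]),$$
and since $[X_j,X_l]$ is tangent to $F$ while the $\alpha_i|_F$ form a coframe there, $[X_j,X_l]=0$. A commuting frame whose dual coframe is closed admits local leafwise coordinates $x_1,\ldots,x_k$ with $\alpha_i|_F = dx_i$, making the metric $\sum dx_i^2$ -- flat. This proves (i).

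For (ii), I compute the Hamiltonian vector field $X_\rho$ of $\rho=|p|^2/2$ in the normal form $\omega = \wM + \sum d(p_j\alpha_j)$ on $U_r$. Decompose $X_\rho = V + \sum b_i\partial_{p_i}$ with $dp_j(V)=0$. Since $\wM$, $\alpha_j$, and $d\alpha_j$ are pullbacks from $M$, they are annihilated by $\partial_{p_i}$, and $\iota_{X_\rho}\omega = -d\rho$ splits into an $\R^k$-component
$$\sum_j \alpha_j(V)\,dp_j \;=\; \sum_j p_j\,dp_j$$
and an $M$-component
$$\iota_V\wM + \sum_j b_j\,\alpha_j + \sum_j p_j\,\iota_V d\alpha_j \;=\; 0.$$
The ansatz $V = \sum p_j X_j$ (pulled back to $U_r$) immediately satisfies $\alpha_i(V) = p_i$, and since $V \in \ker\wM$, both $\iota_V\wM = 0$ and $\iota_V d\alpha_j = 0$ by stability. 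The second equation then forces $b_j=0$, so $X_\rho = \sum p_j X_j$, which preserves each $M_p$ and is leafwise tangent. Restricting to the leaf through a given point and using the flat coordinates from (i), the flow becomes $\dot x_j = p_j$ with the $p_j$'s constant -- exactly the straight-line geodesic of the flat metric with initial velocity $(p_1,\ldots,p_k)$.

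The main obstacle is bookkeeping: correctly separating the pullbacks from $M$ versus from $\R^k$ so that the splitting of $\iota_{X_\rho}\omega = -d\rho$ is clean, and verifying that the dual frame $X_j$ is globally well-defined on $M$ rather than merely on a single leaf. Once these are in place, the stability conditions make the ansatz self-verifying and the geodesic interpretation is immediate from the flat coordinates of part (i).
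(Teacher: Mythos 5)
Your proof is correct; note that the paper itself states this proposition without proof, citing Bolle and Ginzburg, and your argument is essentially the standard one from those references: leafwise closedness plus the top-form condition give the global dual frame $X_1,\ldots,X_k$ of $\ker\wM$ with $\alpha_i(X_j)=\delta_{ij}$, Cartan's formula shows the frame commutes, yielding flat leafwise coordinates, and the normal-form computation gives $X_\rho=\sum_j p_jX_j$. This is also exactly the structure the paper relies on later (the same dual frame and the identity $X_\rho=\sum_j p_jX_j$ appear in the proof of Lemma \ref{lemma:K}), so your approach is consistent with, not divergent from, the paper.
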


\begin{Remark}
  Using this property, it is not hard to show that, whenever $M$ has
  restricted contact type, for every leaf $F$ of $\FF$, the kernel of
  the map $ H_1(F;\R) \to H_1(W;\R)$ is either trivial or one
  dimensional. For instance, if $H_1(W;\R)=0$, every leaf $F$ is
  diffeomorphic to either $\R^k$ or $S^1 \times \R^{k-1}$.
\end{Remark}

\subsection{Conventions and notation} 
\label{subsec:conv}
In this section we set our conventions and notation. 

Let $(W^{2n},\omega)$ be a symplectically aspherical manifold, i.e.,
$\omega|_{\pi_2(W)}=c_1|_{\pi_2(W)}=0$.  Denote by $\Lambda W$ the
space of smooth contractible loops $\gamma\colon S^1\to W$ and
consider a time-dependent Hamiltonian $H\colon S^1\times W\to \R$,
where $S^1=\R/\Z$.  Setting $H_t = H(t,\cdot)$ for $t\in S^1$, we
define the action functional $A_H\colon \Lambda W\to \R$ by
$$
A_H(\gamma)=A(\gamma)+\int_{S^1} H_t(\gamma(t))\,dt,
$$
where $A(\gamma)$ is the negative symplectic area bounded by $\gamma$, i.e.,
$$
A(\gamma)=-\int_{\bar{\gamma}}\omega,
$$
where $\bar{\gamma}\colon D^2\to W$ is such that
$\bar{\gamma}|_{S^1}=\gamma$.

The least action principle asserts that the critical points of $A_H$
are exactly contractible one-periodic orbits of the time-dependent
Hamiltonian flow $\varphi_H^t$ of $H$, where the Hamiltonian vector
field $X_H$ of $H$ is defined by $i_{X_H}\omega=-dH$. We denote the
collection of such orbits by $\PP_H$. The action spectrum $\SC(H)$ of
$H$ is the set of critical values of $A_H$. In other words, $\SC(H)=\{
A_H(\gamma)\mid \gamma\in \PP_H\}$. This is a zero measure set; see,
e.g., \cite{HZ94}.

In what follows we will always assume that $H$ is compactly supported
and set $\supp H=\bigcup_{t\in S^1}\supp H_t$.  In this case, $\SC(H)$
is compact and hence nowhere dense.

Let $J=J_t$ be a time-dependent almost complex structure on $W$. A Floer
anti-gradient trajectory $u$ is a map $u\colon \R\times S^1\to W$
satisfying the equation
\begin{equation*}
\label{eq:floer}
\frac{\p u}{\p s}+ J_t(u) \frac{\p u}{\p t}=-\nabla H_t(u).
\end{equation*}
Here, the gradient is taken with respect to the time-dependent
Riemannian metric $\omega(\cdot,J_t\cdot)$. Denote by $u(s)$ the curve
$u(s,\cdot)\in \Lambda W$.

The energy of $u$ is defined as
\begin{equation*}
\label{eq:energy}
E(u)
=
\int_{-\infty}^\infty \left\|\frac{\p u}{\p s}\right\|_{L^2(S^1)}^2\,ds
=
\int_{-\infty}^\infty \int_{S^1}\left\|\frac{\p u}{\p t}-J\nabla H (u)
\right\|^2 \,dt\,ds.
\end{equation*}
We say that $u$ is asymptotic to $x^\pm\in \PP_H$ as $s\to\pm \infty$
or connecting $x^-$ and $x^+$ if $\lim_{s\to\pm\infty} u(s)=x^\pm$ in
$\Lambda W$. More generally, $u$ is said to be partially asymptotic to
$x^\pm\in \PP_H$ at $\pm\infty$ if $u(s^{\pm}_n)\to x^\pm$ for some
sequences $s^{\pm}_n\to\pm\infty$. In this case
$$
A_H(x^-)-A_H(x^+)=E(u).
$$

In this paper the manifold $W$ is assumed to be exact and, hence,
open. In this case, in order for the Floer homology to be defined, we
assume that $W$ is geometrically bounded. This assumption gives us
sufficient control of the geometry of $W$ at infinity which is
necessary in the case of open manifolds. Examples of such manifolds
include symplectic manifolds that are convex at infinity (e.g.,
$\R^{2n}$, cotangent bundles) as well as twisted cotangent
bundles. (See, e.g., \cite{AL,CGK} for the precise definition and a
discussion of geometrically bounded manifolds.) Under the hypotheses
that $W$ is symplectically aspherical and geometrically bounded, the
compactness theorem for Floer's connecting trajectories holds and the
filtered $\Z$-graded Floer homology of a compactly supported
Hamiltonian on $W$ is defined; see, e.g., \cite{CGK,GG04}.

\subsection{Action selector for wide manifolds}
\label{subsec:selector}
In this section we briefly recall the definition and relevant
properties of the action selector defined in \cite{Gu07} for wide and
geometrically bounded manifolds. We refer the reader to \cite{Gu07,
  Gi07coiso} for more details. Here we only note that action selectors
were constructed in \cite{schw00} for closed manifolds and in
\cite{HZ94} and \cite{Vi92} for $\R^{2n}$ and cotangent bundles,
respectively. The approach of \cite{schw00} has been extended to
manifolds convex at infinity in \cite{FS03}.

\emph{The definition.}  Let $H\colon S^1 \times W\to \R$ be a
compactly supported, non-negative Hamiltonian which is not identically
zero. As was proved in \cite{Gu07}, on a wide manifold, the top degree
Floer homology of $H$ for the action interval $(0,\infty)$ is non-zero
and it carries a canonically defined homology class. (Note that the
homology group itself depends on $H$.) Call this class $[\max H]$ and
define
$$
\sigma(\varphi_H)=\inf\{a> 0\mid j^a_H\left([\max H]\right)=0\}\in
  \SC(H),
$$
where
$$
j^a_H\colon \HF^{(0,\,\infty)}(H)\to \HF^{(a,\,\infty)}(H)
$$
is the quotient map. (This definition coincides with the one from
\cite{FS03} whenever $W$ is convex.)

\emph{Properties of the action selector.}  Focusing on the ones that
are relevant for what follows, recall that the action selector
$\sigma$, defined as above, has the following properties for
non-negative Hamiltonians:

\begin{enumerate}

\item[(S1)] $\sigma$ is monotone, i.e.,
  $\sigma(\varphi_K)\leq\sigma(\varphi_H)$, whenever $0\leq K\leq H$
  point-wise;

\item[(S2)] $0\leq\sigma(\varphi_H)\leq E^+(H)$ for any $H\geq 0$,
  where
$$
E^+(H)=\int_{S^1}\max_W H_t\,dt;
$$

\item[(S3)] $\sigma(\varphi_H)>0$, provided that $H\geq 0$ is not
  identically zero;

\item[(S4)] $\sigma(\varphi_H)$ is continuous in $H$ in the
  $C^0$-topology.

\end{enumerate}
We refer the reader to \cite{Gu07} for the proofs of these properties.


\section {Proof of Theorem \ref{thm:lw}}
\label{sec:lw-proof}
In this section we prove Theorem \ref{thm:lw}. Throughout the proof,
as in Section \ref{subsec:prelim-lw}, a neighborhood of $M$ in $W$ is
identified with a neighborhood of $M$ in $M\times \R^k$ equipped with
the symplectic form $\omega = \wM + \sum_{j=1}^k d(p_j \alpha_j)$.
Using this identification, we denote by $U_R$, with $R>0$ sufficiently
small, the neighborhood of $M$ in $W$ corresponding to $M\times
B^k_R$. (Thus, $U_R=\{\rho < R^2 /2\}$.)

\begin{proof}[Proof of Theorem \ref{thm:lw}]
  First note that, without loss of generality, we may assume that
  $\Fix(\varphi_H) \cap M = \emptyset$, for otherwise the assertion is
  obvious. Then $\varphi_H$ has no fixed points near $M$, say in a
  tubular neighborhood $U=M \times B_R^k$ of $M$ in $W$, where $R>0$
  is sufficiently small. Let $U_r \subset U$ be a smaller tubular
  neighborhood of $M$ for some $r<R$. We may also require that $H \geq
  0$ and $\| H\|= E^+(H)$. (This can be achieved by replacing $H$ by
  $f \cdot (H-\min H)$, where $f$ is a cut-off function equal to one
  near $\supp(H) \cup \bar{U}$.)

  Let $K\geq 0$ be a non-negative function on $[0,r]$ such that

\begin{itemize}
\item $K(0) = \max K >0$ and $K \equiv 0 $ near $r$;
\item $K$ is strictly decreasing until it becomes zero;
\item all odd-order derivatives of $K$ vanish at $0$, and $K''(0)<0$
is close to zero. 
\end{itemize}
Abusing notation, we also denote by $K$ the function on $W$, equal to
$K(|p|)$ on $U_r$, where $ |p| = \sqrt{2 \rho}$, and extended to be
identically zero outside $U_r$. Note that $K$ on $W$ has only two
critical values: $\max K$ and $0$. Observe also that the Hamiltonian
flow of $K$ on $U_r$ is just a reparametrization of the leafwise
geodesic flow on $M$ and the flow is the identity map outside $U_r$.

Recall that $\ai(M) = \inf \{ A(\gamma) >0 \mid \gamma \text{ is
  tangent to } \FF \text{ and contractible in } W \} > 0$, where
$A(\gamma)$ denotes the negative symplectic area bounded by the orbit
$\gamma$, and set $\ai(M) = \infty$ if there is no such $\gamma$.

The first ingredient in the proof of Theorem \ref{thm:lw} is

\begin{Lemma}
\label{lemma:K}
In the above setting, there exists a constant $C>0$, depending on
$U_r$ but not on $K$, such that whenever $\max K \geq C$, we have
$\sigma(\varphi_K) \geq C$. Moreover, $C$ can be taken to be of the
form $C=(1-O(r)) \ai(M)$. If $\ai(M)=\infty$, the constant $C$ can be
taken to be arbitrarily large.
\end{Lemma}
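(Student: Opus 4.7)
The plan is to combine an explicit computation of the action $A_K(x)$ for every non-constant 1-periodic orbit of $\varphi_K$ in $U_r$ with a monotonicity-continuity argument for $\sigma$ along the rescaling family $K_\tau:=(\tau/\max K)\,K$, $\tau\in[0,\max K]$.

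First, I would classify the contractible 1-periodic orbits of $\varphi_K$: constant orbits on $M=\{|p|=0\}$ with action $\max K$, constant orbits in $W\setminus U_r$ with action $0$, and non-constant orbits lying on some level $\{|p|=s\}$ with $s\in(0,r)$. By Proposition \ref{prop:flow}(ii), the flow of $X_K$ on $\{|p|=s\}$ is a reparameterization of the leafwise geodesic flow for the flat metric $\sum(\alpha_j)^2$, so every non-constant orbit $x$ projects to a closed leafwise geodesic $\gamma$ in $M$; moreover $x$ is contractible in $W$ precisely when $\gamma$ is.

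Next, I would derive an action formula. Using the primitive $\mu=\alpha_1+\sum_j p_j\alpha_j$ of $\omega$ on $M\times\R^k$ and a capping disk for $x$ built from a disk in $W$ bounding $\gamma$ together with the radial cylinder $\Sigma(u,t)=(q(t),up_0)$, Stokes' theorem gives
\[
A_K(x)\;=\;A(\gamma)\bigl(1+\textstyle\sum_j p_{0,j}\bigr)+K(s).
\]
The restricted contact type hypothesis enters decisively here: since $\bar\alpha_j-\bar\alpha_1$ is a closed 1-form on $W$, its integral over the contractible loop $\gamma$ vanishes, so the periods $T_j:=\int_\gamma\alpha_j$ coincide for all $j$ (and equal $-A(\gamma)$), which is what makes the right-hand side factor so cleanly. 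Using $|\sum_j p_{0,j}|\le\sqrt k\,|p_0|\le\sqrt k\,r$, together with $|A(\gamma)|\ge\ai(M)$ from the definition of $\ai(M)$ applied to $\gamma$ or $-\gamma$, one obtains the dichotomy
\[
A(\gamma)>0 \;\Longrightarrow\; A_K(x)\ge\ai(M)(1-\sqrt k\,r),\qquad A(\gamma)<0 \;\Longrightarrow\; A_K(x)\le K(s)-\ai(M)(1-\sqrt k\,r).
\]
Set $C:=(1-\sqrt k\,r)\,\ai(M)$.

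The conclusion then follows from a selector-axiom argument. Apply the dichotomy to $K_\tau$ in place of $K$. For $\tau\in[0,C]$ one has $K_\tau(s)\le\tau\le C$, so every non-constant orbit of $\varphi_{K_\tau}$ has action either $\ge C\ge\tau$ or $\le\tau-C\le 0$; hence $\SC(K_\tau)\cap(0,\tau]=\{\tau\}$, and together with $0<\sigma(\varphi_{K_\tau})\le\tau$ from (S2)-(S3) this forces $\sigma(\varphi_{K_\tau})=\tau$. In particular $\sigma(\varphi_{K_C})=C$. For $\tau\ge C$ the monotonicity property (S1) applied to $K_\tau\ge K_C$ gives $\sigma(\varphi_{K_\tau})\ge\sigma(\varphi_{K_C})=C$, and setting $\tau=\max K\ge C$ yields $\sigma(\varphi_K)\ge C$. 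The case $\ai(M)=\infty$ is handled identically: no contractible non-constant orbit exists, so $\sigma(\varphi_{K_\tau})=\tau$ for all $\tau$, and $C$ can be chosen arbitrarily large. The main obstacle is the action formula together with the identification $T_j=-A(\gamma)$: one must set up the capping disk, track orientations through Stokes' theorem, and exploit the restricted contact type to make the periods $\int_\gamma\alpha_j$ independent of $j$. Once this is done, the dichotomy and the selector-axiom argument are essentially routine.
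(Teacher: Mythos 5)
Your proposal is correct in its overall strategy but follows a genuinely different route from the paper. The paper never computes the action spectrum of $K$; instead it runs a Floer-theoretic energy estimate: it constructs one-forms $\beta_i$ interpolating between $\alpha_i$ on $U_r$ and the global primitives $\bar{\alpha}_i$ outside $U_R$, with $\parallel d\beta_i\parallel_{C^0}\le 1+O(r)$ and $i_{X_K}d\beta_i=0$, invokes (a version of) Proposition 5.4 of \cite{Gi07coiso} to produce an orbit $\gamma$ with $A_K(\gamma)\le\sigma(\varphi_K)$ together with a homotopy trajectory $u$ of controlled energy, and bounds $E(u)$ from below by $\parallel d\beta_i\parallel^{-1}\big|\int_{\pi(\gamma)}\alpha_i\big|$ via Stokes' formula, yielding $C=\parallel d\beta_i\parallel^{-1}\ai(M)$. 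You instead compute the entire action spectrum of the rescaled Hamiltonians $K_\tau$ -- using the normal form, the fact that $p$ is constant along orbits, and the restricted contact type condition to identify all periods $\int_\gamma\alpha_j$ with $-A(\gamma)$ for contractible projected loops -- and then pin down $\sigma(\varphi_{K_\tau})=\tau$ for $\tau\le C$ from spectrality (which is part of the definition of $\sigma$ in Section \ref{subsec:selector}) together with (S2)--(S3), finishing with monotonicity (S1). This is a legitimate and more elementary argument (no connecting-trajectory input, no $\beta_i$), and your constant $C=(1-\sqrt{k}\,r)\,\ai(M)$ has the required form $(1-O(r))\ai(M)$; the trade-off is that it relies entirely on exactness and the restricted contact type condition, so, unlike the paper's energy-estimate method, it would not extend to the merely stable case alluded to in the Remark following the lemma.

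There is, however, one genuine gap: your dichotomy silently assumes $A(\gamma)\neq 0$ for the projection of every non-constant contractible orbit. The definition of $\ai(M)$ constrains only loops of \emph{positive} area, so ``applied to $\gamma$ or $-\gamma$'' gives nothing when $A(\gamma)=0$; and if such an orbit existed, its action would be $K_\tau(s)\in(0,\tau)$, destroying the claim $\SC(K_\tau)\cap(0,\tau]=\{\tau\}$ (and, likewise, the assertion that no non-constant contractible orbits exist when $\ai(M)=\infty$). The zero-area case in fact cannot occur, but you must argue it: along a non-constant orbit on the level $|p|=s$ one has $\alpha_i(\dot\gamma)=\frac{K_\tau'(s)}{s}\,p_{0,i}$, so $\int_\gamma\alpha_i=\frac{K_\tau'(s)}{s}\,p_{0,i}$, where $K_\tau'(s)\neq 0$ and $p_{0,i}\neq 0$ for at least one $i$; since for contractible $\gamma$ all these integrals coincide with $-A(\gamma)$, the common value is nonzero, i.e.\ $A(\gamma)\neq 0$ (as a byproduct, contractible non-constant orbits can occur only on levels where all coordinates $p_{0,i}$ are equal). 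With this short addition your argument goes through.
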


\begin{Remark}
  A version of this lemma also holds when $M$ is just stable although
  in this case we cannot guarantee that $C$ has the desired form.
  However, one can then take $C$ to be independent of $r$ (c.f.\
  Theorem 2.7 of \cite{Gi07coiso}).
\end{Remark}

\begin{proof}[Proof of Lemma \ref{lemma:K}]
  Set $ T_x^\perp \FF = \cap \ker {\alpha_j} \subset T_{(x,p)}
  M_p$. Then,
\begin{equation}
\labell{eq:decom}
T_{(x,p)}U = T_x^\perp \FF  \times \left( T_x \FF \times T_p B \right)
\end{equation}
is a decomposition of the tangent space at a point $(x,p) \in U$ into
a direct sum of two symplectic subspaces.  Indeed, observe that
$$
\omega_{(x,p)}= \left(1 + \sum_{j=1}^k p_j \right) {\omega_x}|_{T_x^\perp \FF}
+ \sum_{j=1}^k dp_j \wedge \alpha_j,
$$
where the first symplectic form vanishes on $T_x \FF \times T_p B$ and
the second one vanishes on $ T_x^\perp \FF $.

Let us now introduce an almost complex structure $J = J(x,p)$ on $W$,
compatible with $\omega$ and \eqref{eq:decom}. To this end, observe
that the non-degeneracy condition on $\alpha_i$'s implies that the
forms are linearly independent and leafwise closed. Let
$X_1,\ldots,X_k$ be the basis in $T_x \FF$ dual to
$\alpha_1,\ldots,\alpha_k$, i.e., $\alpha_i(X_j)=\delta_{ij}$.
Clearly, $\partial_{p_1},\ldots,\partial_{p_k}$ form a basis in $T_p
B$.  Define $J$ such that $ T_x^\perp \FF $ and $T_x \FF \times T_p B$
are complex subspaces in $T_{(x,p)}U$; furthermore, $J|_{T_x^\perp
  \FF}$ is compatible with ${\omega_x}|_{T_x^\perp \FF}$; and on $T_x
\FF \times T_p B$, we have $J(X_i)=\partial_{p_i}$. (Outside $U_R$, we
take $J$ to be an arbitrary almost complex structure compatible with
$\omega$.)  The pair $(\omega,\,J)$ gives rise to a metric on $M$
compatible with $\omega$ and such that \eqref{eq:decom} is an
orthogonal decomposition.

With respect to this metric, $\parallel
\partial_{p_i} \parallel = 1$ and $\parallel X_i \parallel =
1$. Moreover, $\parallel \omega \parallel := \sup_{X,Y}\,\omega (X,Y)
=1$, where the supremum is taken over all tangent vectors $X,\,Y$ with
norm one. Note also that the Hamiltonian vector field of
$\rho=(p_1^2+\cdots+p_k^2)/2$ is $X_\rho=\sum_{j=1}^k p_j X_j$ and,
thus, $J(X_\rho)= \sum_{j=1}^k p_j \partial_{p_j} $.

We may assume without loss of generality that the forms $\alpha_i$
extend to primitives $\bar{\alpha}_i$ of $\omega$ on $W$ such that
$$
\bar{\alpha}_i = \alpha_i + \sum_{j=1}^k p_j {\alpha}_j  \mbox{ on } U_R.
$$
In the spirit of \cite{Bo96, Bo98, Gi07coiso}, our next goal is to
define smooth one-forms $\beta_i$ on $W$ for $i=1,\ldots,k$ such that
$ \beta_i$ agrees with $\alpha_i$ on $U_r$ and with $\bar{\alpha}_i$
outside $U_R$ and that
$$
\parallel d\beta_i \parallel_{C^0} \leq 1 + O(r). 
$$
To this end, let $g=g_r(y)$ be a family of smooth, non-negative,
monotone increasing functions defined on $[0,\infty)$ such that
$g\equiv 0$ on $[0,r]$ and $g\equiv 1$ on $[R,\infty)$, and that $0
\leq g'(y) \, y + g(y) \leq 1+ O(r)$. It is not hard to write an
explicit formula for such functions $g$. Abusing notation, denote also
by $g$ the function $g(|p|)$ defined on $W$. Finally, define the
one-forms $\beta_i$ as
$$ 
\beta_i = 
\begin{cases}
\alpha_i + g(|p|) \sum_{j=1}^k p_j {\alpha}_j & \mbox{on } U_R\\
\bar{\alpha}_i & \mbox{outside } U_R
\end{cases}
$$
Now a straightforward, but tedious, calculation shows that $
\parallel d\beta_i \parallel_{C^0} \leq 1 + O(r)$. It is particularly
easy to see that this is the case when $M$ is a hypersurface. Namely,
then $k=1$ and $d\beta = (1+f) d\alpha + df \wedge \alpha$, where
$f(y)=y\,g(y)$. Notice that we only need to prove the desired estimate
on $U_R$. Then, $d\alpha = \pi^* \wM$ and the first form vanishes on
$T_x \FF \times T_p B$, while the second form vanishes on $T_x^\perp
\FF = \ker \alpha$. Thus, it suffices to prove the estimate on these
two symplectic subspaces separately. To this end, observe that on
$U_R$ we have
$$
\parallel d{\beta}|_{T_x^\perp \FF} \parallel = (1+f)/(1+p) = 1 + O(r),
$$ 
as is easy to see. On the other hand, $T_x \FF \times T_p B$ is
spanned by $\{ X, \p_p \}$ and
$$
\parallel d{\beta}|_{T_x \FF \times T_p B} \parallel = |d \beta (X, \p_p)|.
$$ 
By the definition of $g$, and, hence, of $f$, we have $d \beta (X,
\p_p) = f' \leq 1 +O(r)$. The calculation in the general case is more
involved but follows the same track as the one for hypersurfaces.

A feature of the form $\beta_i$, important in what follows, is that
\begin{equation}
\label{eq:0}
i_{X_K}d\beta_i=0.
\end{equation}
To see this, observe first that \eqref{eq:0} trivially holds outside
$U_r$, for $\supp K \subset U_r$.  Moreover, $d\beta_i=d \pi^*
\alpha_i = \pi^* \wM$ on $U_r$ and we then have
$$
i_{X_K}d\beta_i=i_{X_K}\pi^*d\alpha_i=K'(\rho)\,i_{\pi_*X_\rho}d\alpha_i=0.
$$
The last equality follows from the fact that $\pi_*X_\rho$ is tangent
to $\FF$ by Proposition~\ref{prop:flow} and that $T\FF \subset \ker
d\alpha_i$ since $M$ is stable.

Let $\epsilon >0$ be small enough so that $f=\epsilon K$ is
$C^2$-small and consider a linear homotopy from $K$ to $f$, running
through functions of $\rho$.  Then, there exist an orbit $\gamma$ of
$K$ and a homotopy trajectory $u$ partially asymptotic to $\gamma$ at
$-\infty$ and to a critical point of $f$ (and, hence, of $K$) on $M$
at $\infty$ such that
\begin{itemize}
\item[(i)] $A_K(\gamma) \leq \sigma(\varphi_K)$, and

\item[(ii)] $E(u) \leq A_K(\gamma) - \epsilon \max K \leq
  \sigma(\varphi_K) - \epsilon \max K$.
\end{itemize}
This fact is essentially a particular case of Proposition 5.4 in
\cite{Gi07coiso} with the displaceability requirement being
inessential for us and it can be proved exactly in the same way.

We claim that
\begin{equation}
\labell{eq:energy-bound}
E(u)
  \geq \parallel d\beta_i \parallel^{-1} \left|\int_{\pi(\gamma)}\alpha_i\right|.
\end{equation}
To prove \eqref{eq:energy-bound}, fix $s_n^\pm\to \pm\infty$ such that
$u(s_n^+)$ converges to 
a point of $M$ and $u(s_n^-)$ converges to $\gamma$ in
$C^\infty(S^1,W)$. Then, using \eqref{eq:0}, we have

\begin{eqnarray*}
E(u)
&=&
\int_{-\infty}^\infty \int_{S^1}
\left\| \frac{\p u}{\p s}\right\|
\cdot
\left\| \frac{\p u}{\p t}-X_K\right\|
\,dt\,ds
\\
&\geq&
\parallel d\beta_i \parallel^{-1}
\int_{-\infty}^\infty \int_{S^1}
\left|d\beta_i\left(\frac{\p u}{\p s}, \frac{\p u}{\p t}-X_K\right)\right|
\,dt\,ds
\\
&\geq&
\parallel d\beta_i \parallel^{-1}
\liminf_{n\to\infty}
\left|\int_{s_n^-}^{s_n^+}\int_{S^1}
d\beta_i\left(\frac{\p u}{\p s}, \frac{\p u}{\p t}-X_K\right)
\,dt\,ds\right|
\\
&=&
\parallel d\beta_i \parallel^{-1}
\liminf_{n\to\infty}
\left|\int_{s_n^-}^{s_n^+}\int_{S^1}
d\beta_i\left(\frac{\p u}{\p s}, \frac{\p u}{\p t}\right)
\,dt\,ds\right|.
\end{eqnarray*}
By Stokes' formula,
$$
\left|\int_{s_n^-}^{s_n^+}\int_{S^1}
d\beta_i\left(\frac{\p u}{\p s}, \frac{\p u}{\p t}\right)
\,dt\,ds\right|
=
\left|\int_{u(s_n^+)}\beta_i-\int_{u(s_n^-)}\beta_i\right|
\longrightarrow
\left|\int_{\gamma}\beta_i\right|
$$
as $n\to\infty$.
Furthermore, recall that $\gamma$ is contained in $U_r$
and $\beta_i|_{U_r}=\pi^*\alpha_i$. Thus,
$$
\left|\int_{\gamma}\beta_i\right|=\left|\int_{\gamma}\pi^*\alpha_i\right|
=\left|\int_{\pi(\gamma)}\alpha_i\right|,
$$
which completes the proof of \eqref{eq:energy-bound}.

Set $C= \parallel d\beta_i \parallel^{-1} \ai(M) $, assuming that
$\ai(M)< \infty$. It is clear that then $C =\left( 1 - O(r) \right)
\ai(M) >0$. Assume now that $\max K \geq C$ and recall that $\sigma
(\varphi_K) \leq \max K$.  If $\sigma(\varphi_K) = \max K$, the proof
is finished. Let us focus on the case when $\sigma(\varphi_K) < \max
K$. Then, $\gamma \subset U_r$ is necessarily nontrivial.  Indeed,
trivial orbits of $K$ occur where $K$ is constant, i.e. $K=0$ or $K =
\max K$. Since (i) implies that $A_K(\gamma) \leq \sigma(\varphi_K) <
\max K$, we must have $K = 0$ whenever $\gamma$ is constant and,
hence, $A_K(\gamma)=0$. Consequently, we infer from (ii) that $E(u) <
0$, which is clearly a contradiction.  Hence, $\gamma$ is nontrivial
and, as an immediate consequence, we have
$\left|\int_{\pi(\gamma)}\alpha_i\right| > 0$.  Then, by definition of
$\ai(M)$,
\begin{equation*}
\sigma(\varphi_K)
\geq
E(u)
\geq 
\parallel d\beta_i \parallel^{-1} \left|\int_{\pi(\gamma)}\alpha_i\right|
\geq
\parallel d\beta_i \parallel^{-1} \ai(M)
=
C.
\end{equation*}

Finally, note that if $\ai(M)= \infty$, we must necessarily have
$\sigma(\varphi_K) = \max K$ and, then, $C$ can be taken to be
arbitrarily large.
\end{proof}

Returning to the proof of Theorem \ref{thm:lw}, let $C >0$ be a
constant as in Lemma \ref{lemma:K} and assume from now on that the
function $K$ has the additional property that $\max K = 2 C >
C$. Thus, $\sigma(\varphi_K) \geq C$.

For the sake of convenience, let us reparametrize $H$ and $K$, as
functions of $s$ and $t$, so that $H_t \equiv 0$ on $[0,1/2]$ and $K_t
\equiv 0$ on $[1/2,1]$. (Note that the time-one maps, the Hofer norms,
the action spectra, and the action selectors remain the same.) Then,
the flow given by (a smooth reparametrization of) the concatenation of
paths $\varphi_K^t$ and $\varphi_H^t\varphi_K$, where $t \in [0,1]$,
is homotopic with fixed end points to $\varphi_H^t\varphi_K^t$, and is
generated by the Hamiltonian $H_t+K_t$.

Consider the family of diffeomorphisms
$\psi_s^t=\varphi_H^t\varphi_{sK}^t$ for $s\in [0,\,1]$, starting at
$\psi_0^t=\varphi_H^t$ and ending at
$\psi_1^t=\varphi_H^t\varphi_{K}^t$. Note that, for a fixed $s$, we
have $\psi_s ^0 = \id$ and $\psi_s^1=\psi_s=\varphi_H \varphi_{sK}$,
and, thus each $\psi_s^t$, as an an element of $\Hamt$, is generated
by the Hamiltonian
$H_t + sK_t =: G_t^{s}$. 
Examining the fixed points of $\psi_s$, as is easy to see using the
assumption that $\Fix(\varphi_H) \cap U_r = \emptyset$, we have the
disjoint union decomposition
$$
\Fix(\psi_s)=\Fix(\varphi_H)\sqcup Z_s, \text{ where } Z_s=\{x\in U_r
\mid \varphi_H^{-1}(x)=\varphi_{sK}(x)\}.
$$
Furthermore, $\sigma(\psi_s)$, as a function of $s$, is not
constant. Indeed, first note that, by our assumption on $H$,
$$\sigma(\psi_0) = \sigma(\varphi_H)\leq \|H\| < C. $$ 
Since $H\geq 0$ and $\sigma$ is monotone, we also have the inequality
$$
\sigma(\psi_1)=\sigma(\varphi_H\varphi_K) \geq \sigma(\varphi_K).
$$
Finally, using Lemma \ref{lemma:K} along with these two inequalities,
we obtain
$$
\sigma(\psi_0) < C \leq \sigma(\varphi_K) \leq \sigma(\psi_1).
$$
This shows that $\sigma(\psi_s)$ is not constant. On the other hand,
$\sigma(\psi_s)$ is a continuous function of $s$ and the set $\Fix
(\varphi_H)$ is nowhere dense. Thus, $Z_{s_0} \neq \emptyset$ for some
$s_0 \in (0,1]$ and $\sigma(\psi_{s_0})$ is the action value of
$\psi_{s_0}$ on a fixed point $x \in Z_{s_0}$. (Notice that $Z_0= \Fix
(\varphi_H) \cap U_r = \emptyset$.)

Let $\gamma(t)$ be the one-periodic orbit of the time-dependent flow
$\psi_{s_0}^t = \varphi_H^t \varphi^t_{s_0K}$, passing through
$x=\gamma(0)$. This flow is generated by $G^{s_0}_t= H_t + s_0 K_t$
and, due to the above parametrizations of $H$ and $K$, the orbit
$\gamma$ is comprised of two parts: $\gamma_K(t)=\varphi^t_{s_0K}(x)$
-- a trajectory of $s_0K$ beginning at $x$ and ending at
$y=\varphi_{s_0K}(x)$ -- and $\gamma_H(t)=\varphi_H^t(y)$ -- a
trajectory of $H$ beginning at $y$ and ending at $x$. Note that $x$
and $y$ lie on the same leaf of the characteristic foliation on some
$M_p$ and $\varphi_H(y)=x$, where $p \in B_r^k$ such that $0<|p|<r$.
 
Furthermore,
\begin{equation*}
\label{eq:gamma}
\sigma(\psi_{s_0})=A_{G^{s_0}}(\gamma) =-\int_{\gamma_K}\lambda+
\int_0^{1/2} s_0K_t(\gamma_K(t))\,dt + A_H(\gamma_H),
\end{equation*}
where $\lambda$ is a global primitive of $\omega$. Our next goal is to
show that
\begin{equation}
\label{eq:T}
\left|\int_{\gamma_K}\lambda \right|
=
\left| -\sigma(\psi_{s_0})+
\int_0^{1/2}s_0K_t(\gamma_K(t))\,dt + A_H(\gamma_H) \right|
\end{equation}
is bounded by a constant independent of $K$, which will in turn imply
that the time required to move $x$ to $y$ is (uniformly) bounded. To
see this, first recall that
$$
0 < \sigma(\psi_{s_0}) \leq \| H_t + s_0 K_t \| \leq \|H\| + \|K\|
\leq C + 2C = 3C.
$$
The second term in \eqref{eq:T} is bounded from below by zero since
$K\geq 0$ and from above by $\max K/2 = C$. Finally,
$|A_H(\gamma_H)|$ is bounded. Indeed, consider the function
$h(z)=A_H(\varphi_H^t(z))$, where $t\in [0,1]$, defined using the
primitive $\lambda$. (For example, $h(y)=A_H(\gamma_H)$.) This
function is compactly supported and independent of $K$. Letting $C' =
\max \{|\max h|,|\min h|\}$, a constant independent of $K$, we see
that $|\int_{\gamma_K}\lambda| \leq 4 C + C'$.

Let $X=X_{\rho}/|p|$ and note that $\alpha_i (X)=p_i/|p|$, where
$|p|=\sqrt{2\rho}$. Consider the primitives $\lambda = \bar{\alpha}_i$
of $\omega$ for $i=1,\ldots,k$.  As was shown above,
$$
\left| \int_{\gamma_K}\lambda \right| 
= 
\left| \int_{\gamma_K}\bar{\alpha}_i \right| 
\leq
C_i,
$$ 
where $C_i >0$ is a constant independent of $K$. On the other hand,
on $U_R$ we have
$$
\bar{\alpha}_i (X) =
\alpha_i (X) + \sum_{j=1}^k p_j \alpha_j (X) = \frac{p_i}{|p|} + |p|.
$$
As a result, 
$$
\int_{\gamma _K} \bar{\alpha}_i = 
\int_0^T \bar{\alpha}_i (X) \, dt = 
T \left( \frac{p_i}{|p|} + |p| \right), 
$$
where $T$ is the time required for the flow of $X$ to move $x$ to $y$.
For $\bar{C} = \sum_{i=1}^k C_i$, we then have
\begin{eqnarray*}
\bar{C} \geq
\sum_{i=1}^k \left| \int_{\gamma _K} \bar{\alpha}_i \right|
&=&
\sum_{i=1}^k T \left| \frac{p_i}{|p|} + |p| \right|
\\ 
&\geq& T \left( \sum_{i=1}^k \frac{|p_i|}{|p|}  -
k \, |p| \right) 
\\ 
&\geq& T \left( c - k \, |p| \right), 
\end{eqnarray*}
where $ c= \min_{|p|=1} \sum_{i=1}^k |p_i| >0 $. Note that $|p| <
r$. Thus, when $r < c/ 2k$, we have $ \bar{C} \geq T \, c /2$.  Hence
$ T \leq 2 \bar{C}/c $, where the right-hand side is independent of
$K$.

As the final step of the proof, consider a sequence $r_i \to 0$ and a
sequence of Hamiltonians $K_i$ such as $K$, supported on the (smaller
and smaller) neighborhood $U_{r_i}$ of $M$. For each $K_i$, we have a
pair of points $x_i$ and $y_i$ lying on the same leaf of the
characteristic foliation on some $M_{p(i)}$ with $|p(i)|\to 0$ and
such that $\varphi_H(y_i)=x_i$. Furthermore, the time $T_i$ required
to move $x_i$ to $y_i$ is bounded from above by a constant independent
of $K_i$ and $r_i$. Applying the Arzela--Ascoli theorem and passing if
necessary to a subsequence, we obtain points $x=\lim x_i$ and $y=\lim
y_i$ on $M$ lying on the same leaf of the characteristic foliation on
$M$ and such that $\varphi_H(y)=x$.  This completes the proof of
Theorem \ref{thm:lw}.

\end{proof}




\begin{thebibliography}{CGK}

\bibitem[AF1]{AF08}
P. Albers, U. Frauenfelder,
Leaf-wise intersections and Rabinowitz Floer homology, Preprint 2008,
arXiv:0810.3845.


\bibitem[AF2]{AF08generic}
P. Albers, U. Frauenfelder,
Infinitely many leaf-wise intersection points on cotangent bundles,
Preprint 2008, arXiv:0812.4426.


\bibitem[AL]{AL}
M. Audin, J. Lafontaine (Eds), 
\emph{Holomorphic curves in symplectic geometry}, Progress in
Mathematics, \textbf{117}, Birkh\"auser Verlag, Basel, 1994.


\bibitem[Ba]{Ba}
A. Banyaga,
On fixed points of symplectic maps, \emph{Invent.\ Math.}, \textbf{56}
(1980), 215--229.


\bibitem[Bo1]{Bo96}
P. Bolle, 
Une condition de contact pour les sous-vari\'et\'es co\"isotropes
d'une vari\'et\'e symplectique, \emph{C.\ R.\ Acad.\ Sci.\ Paris,
S\'erie I}, \textbf{322} (1996), 83--86.


\bibitem[Bo2]{Bo98}
P. Bolle, 
A contact condition for p-dimensional submanifolds of a symplectic
manifold ($2\leq p\leq n$), \emph{Math.\ Z.}, \textbf{227} (1998),
211--230.


\bibitem[CFO]{CFO}
K. Cieliebak, U. Frauenfelder, A. Oancea,
Rabinowitz Floer homology and symplectic homology, Preprint 2009,
arXiv:0903.0768.


\bibitem[CGK]{CGK}
K. Cieliebak, V. Ginzburg, E. Kerman,
Symplectic homology and periodic orbits near symplectic submanifolds,
\emph{Comment.\ Math.\ Helv.}, \textbf{79} (2004), 554--581.


\bibitem[D]{Dr}
D. Dragnev, 
Symplectic rigidity, symplectic fixed points and global perturbations
of Hamiltonian systems, \emph{Comm.\ Pure Appl.\ Math.} \textbf{61}
(2008), no.\ 3, 346--370.


\bibitem[EH]{EH} 
I. Ekeland, H. Hofer, 
Two symplectic fixed-point theorems with applications to Hamiltonian
dynamics, \emph{J.\ Math.\ Pures Appl.} \textbf{68} (1989), 467--489.


\bibitem[FS]{FS03} 
U. Frauenfelder, F. Schlenk, 
Hamiltonian dynamics on convex symplectic manifolds, \emph{Israel J.\
Math}, \textbf{15} (2006).


\bibitem[Gi]{Gi07coiso}
V.L. Ginzburg,
Coisotropic intersections, \emph{Duke Math.\ J.}, \textbf{140} (2007),
111--163.


\bibitem[GG]{GG04}
V.L. Ginzburg, B.Z. G\"urel,
Relative Hofer--Zehnder capacity and periodic orbits in twisted
cotangent bundles, \emph{Duke Math.\ J.}, \textbf{123} (2004), 1--47.


\bibitem[G\"u1]{Gu07}
B.Z. G\"urel, 
Totally non-coisotropic displacement and its applications to
Hamiltonian dynamics, \emph{Comm.\ Contemp.\ Math.} \textbf{10}
(2008), no.\ 6, 1103--1128.


\bibitem[G\"u2]{Gu09cex}
B.Z. G\"urel, 
Fragility of leafwise coisotropic intersections, in preparation.


\bibitem[H]{Ho90a}
H. Hofer, 
On the topological properties of symplectic maps, \emph{Proc.\ Roy.\
Soc.\ Edinburgh Sect. A}, \textbf{115} (1990), no.\ 1-2, 25--38.


\bibitem[HZ]{HZ94}
H. Hofer, E. Zehnder,
\emph{Symplectic Invariants and Hamiltonian Dynamics}, Birk\"auser,
1994.


\bibitem[K]{Ke07coiso} 
E. Kerman,
Displacement energy of coisotropic submanifolds and Hofer's geometry,
\emph{J.\ Mod.\ Dyn.} \textbf{2} (2008), no.\ 3, 471--497.


\bibitem[M]{Mo78}
J. Moser, 
A fixed point theorem in symplectic geometry, \emph{Acta Math.},
\textbf{141} (1978), 17--34.


\bibitem[S]{schw00}
M. Schwarz, 
On the action spectrum for closed symplectically aspherical manifolds,
\emph{Pacific J.\ Math.}, \textbf{193} (2000), 419--461.


\bibitem[T]{To}
B. Tonnelier,
in preparation.


\bibitem[U]{U09coiso}
M. Usher, 
Boundary depth in Floer theory and its applications to Hamiltonian
dynamics and coisotropic submanifolds, Preprint 2009, arXiv:0903.0903.


\bibitem[V]{Vi92}
C. Viterbo, 
Symplectic topology as the geometry of generating functions,
\emph{Math.\ Ann.}, \textbf{292} (1992), no.\ 4, 685--710.


\bibitem[Z]{Zilt08}
F. Ziltener,
Coisotropic Submanifolds, leafwise fixed points, and presymplectic
embeddings, Preprint 2008, arXiv:0811.3715v2.


\end{thebibliography}
\end{document}